\newtheorem{Theorem}{Theorem}[section]
\newtheorem{Lemma}[Theorem]{Lemma}
\newtheorem{Cor}[Theorem]{Corollary}
\newtheorem{Prop}[Theorem]{Proposition}
\newtheorem{Rem}[Theorem]{Remark}
\def\cS{\mathcal{S}}
\def\fA{\mathfrak{A}}
\def\Erw{\mathbb{E}}
\def\N{\mathbb{N}}
\def\Prob{\mathbb{P}} 
\def\R{\mathbb{R}}
\def\Z{\mathbb{Z}}
\def\bS{\textit{\bfseries S}}
\def\bZ{\textit{\bfseries Z}}
\def\1{\vec{1}}
\def\3{{\ss}}
\def\eqdist{\stackrel{d}{=}}
\def\wh{\widehat}
\def\sg{\sigma^{>}}
\def\sgn{\sigma_{n}^{>}}
\def\sge{\sigma^{\geqslant}}
\def\sle{\sigma^{\leqslant}}
\def\slen{\sigma_{n}^{\leqslant}}
\def\tg{\tau^{>}}
\def\Mg{M^{>}}
\def\Mgn{M_{n}^{>}}
\def\Mlen{M_{n}^{\leqslant}}
\def\Sg{S^{>}}
\def\Sgn{S_{n}^{>}}
\def\Sle{S^{\leqslant}}
\def\Slen{S_{n}^{\leqslant}}
\begin{document}

\title*{Ladder epochs and ladder chain of a Markov random walk with discrete driving chain}
\titlerunning{The ladder variables of a Markov random walk}
\author{Gerold Alsmeyer}
\institute{Gerold Alsmeyer \at Inst.~Math.~Statistics, Department
of Mathematics and Computer Science, University of M\"unster,
Orl\'eans-Ring 10, D-48149 M\"unster, Germany.\at
\email{gerolda@math.uni-muenster.de}}

\maketitle

\abstract{Let $(M_{n},S_{n})_{n\ge 0}$ be a Markov random walk with positive recurrent  driving chain $(M_{n})_{n\ge 0}$ having countable state space $\cS$ and stationary distribution $\pi$. It is shown in this note that, if the dual sequence $({}^{\#}M_{n},{}^{\#}S_{n})_{n\ge 0}$ is positive divergent, i.e. ${}^{\#}S_{n}\to\infty$ a.s., then the strictly ascending ladder epochs $\sgn$ of $(M_{n},S_{n})_{n\ge 0}$ (see \eqref{eq:def sgn}) are a.s. finite and the ladder chain $(M_{\sgn})_{n\ge 0}$ is positive recurrent on some $\cS^{>}\subset\cS$. We also provide simple expressions for its stationary distribution $\pi^{>}$, an extension of the result to the case when $(M_{n})_{n\ge 0}$ is null recurrent, and a counterexample that demonstrates that ${}^{\#}S_{n}\to\infty$ a.s. does not necessarily entail $S_{n}\to\infty$ a.s., but rather $\limsup_{n\to\infty}S_{n}=\infty$ a.s. only. Our arguments are based on Palm duality theory, coupling and the Wiener-Hopf factorization for Markov random walks with discrete driving chain.}

\bigskip

{\noindent \textbf{AMS 2000 subject classifications:}
60J10 (60K15) \ }

{\noindent \textbf{Keywords:} Markov-modulated sequence, Markov random walk, discrete Markov chain, ladder variables, ladder chain, stationary distribution, coupling, Palm-duality, Wiener-Hopf factorization}

\section{Introduction}
Let $M=(M_{n})_{n\ge 0}$ be a positive recurrent Markov chain, defined on a probability space $(\Omega,\fA,\Prob)$, with at most countable state space $\cS$, transition matrix $P=(p_{ij})_{i,j\in\cS}$ and unique stationary distribution $\pi=(\pi_{i})_{i\in\cS}$. Further, let $(X_{n})_{n\ge 1}$ be a sequence of real-valued random variables which are conditionally independent given $M$ and
$$ \Prob(X_{n}\in\cdot|M)\ =\ \Prob(X_{n}\in\cdot|M_{n-1},M_{n})\ =:\ F_{M_{n-1},M_{n}}\quad\text{a.s.} $$
for all $n\ge 1$. Equivalently, $(M_{n},X_{n})_{n\ge 1}$, called \emph{Markov-modulated sequence}, forms a temporally homogeneous Markov chain on $\cS\times\R$ with a special transition kernel, namely
\begin{align}
\begin{split}\label{eq:transition kernel Q}
Q((i,x),\{j\}\times (-\infty,t])\ &:=\ \Prob(M_{n+1}=j,\,X_{n+1}\le t|M_{n}=i,X_{n}=x)\\
&\ =\ \Prob(M_{n+1}=j,X_{n+1}\le t|M_{n}=i)\\
&\ =\ p_{ij}\,F_{ij}(t)
\end{split}
\end{align}
for all $i,j\in\cS$, $t,x\in\R$ and $n\ge 1$. 

\vspace{.1cm}
As usual, we write $\Prob_{i}$ for $\Prob(\cdot|M_{0}=i)$, $\Erw_{i}$ for expectations with respect to $\Prob_{i}$, and put $\Prob_{\lambda}:=\sum_{i\in\cS}\lambda_{i}\Prob_{i}$ for any measure $\lambda=(\lambda_{i})_{i\in\cS}$ on $\cS$. Under $\Prob_{\pi}$, $(M_{n},X_{n})_{n\ge 1}$ forms a stationary sequence and can therefore be extended to a doubly infinite sequence $(M_{n},X_{n})_{n\in\Z}$. Note that ``$\Prob_{\pi}$-a.s.'', also just stated as ``a.s.'' hereafter, means $\Prob_{i}$-a.s. for all $i\in\cS$ because all $\pi_{i}$ are positive. In the doubly infinite setup, we further use $\Prob_{i,x}$ for $\Prob(\cdot|M_{0}=i,X_{0}=x)$ and let $\Prob_{\nu}$ have the obvious meaning for a measure on $\cS\times\R$. Finally, the stationary distribution of $(M_{n},X_{n})_{n\in\Z}$ is denoted as $\xi$.

\vspace{.1cm}
Under the stated assumptions, the additive sequence $(S_{n})_{n\ge 0}$, defined by $S_{0}:=0$ and $S_{n}:=\sum_{k=1}^{n}X_{k}$ for $n\ge 1$, as well as its bivariate extension $(M_{n},S_{n})_{n\ge 0}$ are called a \emph{Markov random walk (MRW)} or \emph{Markov-additive process} and $M$ its \emph{(discrete) driving chain}. If $(S_{n})_{n\ge 0}$ is \emph{positive divergent}, i.e.
\begin{equation}\label{eq:positive divergence}
\lim_{n\to\infty}S_{n}\ =\ \infty\quad\text{a.s.},
\end{equation}
then the associated \emph{(strictly ascending) ladder height process} may be defined as its maximal increasing subsequence $(S_{\sgn})_{n\ge 0}$ with $\sg_{0}\equiv 0$. Here "maximal" means that any other subsequence
$(S_{\eta_{n}})_{n\ge 0}$ with positive increments and $\eta_{0}\equiv 0$ has 
a lower sampling rate $(\sgn\le\eta_{n}$ for all $n\ge 1$). Formally, we have
for $n\ge 1$
\begin{equation}\label{eq:def sgn}
\sgn\ :=\ \inf\{k>\sg_{n-1}:S_{k}>S_{\sg_{n-1}}\}
\end{equation}
with the usual convention that this is $\infty$ if $\sg_{n-1}=\infty$ or the stopping condition is never met. Using the strong Markov property, one can easily verify that
$$ (\Mgn,\Sgn)_{n\ge 0}\ :=\ (M_{\sgn},S_{\sgn})_{n\ge 0}\quad\text{and}\quad (\Mgn,\sgn)_{n\ge 0} $$
form again MRW's. Their common driving chain $(\Mgn)_{n\ge 0}$ is called \emph{ladder chain} hereafter. The main purpose of this note is to show how Palm calculus and Wiener-Hopf factorization may be employed in an elegant way to derive that the ladder chain is again positive recurrent and to provide information on its stationary distribution $\pi^{>}$, say, in terms of $\pi$. In the case when the stationary drift $\mu:=\Erw_{\pi}X_{1}$ exists and is positive and thus $n^{-1}S_{n}\to\mu$ a.s., in particular \eqref{eq:positive divergence} holds true, the positive recurrence of $(\Mgn)_{n\ge 0}$ along with other properties of $(\Mgn,\Sgn,\sgn)_{n\ge 0}$ has already been proved in \cite{Alsmeyer:00} even allowing for uncountable state space $\cS$. On the other hand, the arguments given there are rather technical, owing to the more delicate renewal structure of a general positive Harris chain on a continuous state space as opposed to a discrete Markov chain. To keep the amount of technicalities at a minimum has been the main reason to restrict ourselves here to the discrete setting.

\vspace{.1cm}
It should be clear that recurrence of the ladder chain and related properties form an important ingredient when dealing with Markov renewal theory or, more generally, fluctuation-theoretic properties of the MRW $(M_{n},S_{n})_{n\ge 0}$. Namely, it allows to identify a subsequence $(S_{\tg_{n}(s)})_{n\ge 0}$ of $(\Sgn)_{n\ge 0}$ and thus of $(S_{n})_{n\ge 0}$ which is an ordinary renewal process (a RW with positive increments) because $M_{\tg_{n}(s)}=s$ for all $n\ge 1$ and some $s\in\cS$. Such embeddings are fundamental when attempting to derive results of the afore-mentioned kind for MRW's by drawing on known results for ordinary RW's or renewal processes. For the case when $(M_{n},S_{n})_{n\ge 0}$ has positive stationary drift, this has recently been demonstrated in \cite{Alsmeyer:14} by showing that all fundamental Markov renewal theorems can be deduced with the help of such embeddings and the use of classical renewal theory.

\section{Main results}\label{sec:main results}

In order to present our main results, we first need to return to the doubly infinite sequence $(M_{n},X_{n})_{n\in\Z}$ under $\Prob_{\xi}$ and define the associated doubly infinite random walk $(S_{n})_{n\in\Z}$ via
\begin{equation}\label{eq:def doubly infinite RW}
S_{n}\ :=\ 
\begin{cases}
\hfill\sum_{i=1}^{n}X_{i},&\text{if }n\ge 1,\\
\hfill 0,&\text{if }n=0,\\
\hfill-\sum_{i=n+1}^{0}X_{i},&\text{if }n\le -1,
\end{cases}
\end{equation}
thus $S_{n}=S_{n-1}+X_{n}$ for all $n\in\Z$. We note that the forward sequence $(M_{n},X_{n})_{n\in\Z}$ is a stationary Markov chain with transition kernel $Q$ given by \eqref{eq:transition kernel Q}, while the backward sequence $({}^{\#}M_{n},{}^{\#}X_{n})_{n\in\Z}:=(M_{-n},X_{-n+1})_{n\ge 0}$ is a stationary Markov chain with the dual kernel ${}^{\#}Q$ given by
\begin{equation}\label{eq:dual kernel}
{}^{\#}Q((i,x),\{j\}\times (-\infty,t])\ =\ \frac{\pi_{j}p_{ji}}{\pi_{i}}\,F_{ji}(t)
\end{equation}
for $i,j\in\cS$ and $x,t\in\R$. We call $({}^{\#}M_{n},{}^{\#}X_{n})_{n\in\Z}$ and $({}^{\#}M_{n})_{n\in\Z}$ the dual of $(M_{n},X_{n})_{n\in\Z}$ and $(M_{n})_{n\in\Z}$, respectively. Accordingly, the MRW $({}^{\#}M_{n},{}^{\#}S_{n})_{n\in\Z}$ with ${}^{\#}S_{n}$ having the obvious meaning is called the dual of $(M_{n},S_{n})_{n\ge 0}$. Note that ${}^{\#}S_{n}=-S_{-n}$ for $n\in\Z$.
 
\vspace{.1cm}
If $\mu=\Erw_{\pi}X_{0}>0$, then Birkhoff's ergodic theorem implies \eqref{eq:positive divergence} as well as the positive divergence of the dual walk, i.e.
\begin{equation}\label{eq:left positive divergence}
\lim_{n\to\infty}{}^{\#}S_{n}\ =\ \lim_{n\to\infty}-S_{-n}\ =\ \infty\quad\text{a.s.}
\end{equation}
However, the last assertion may fail if only \eqref{eq:positive divergence} holds, see Section \ref{sec:IPFC} at the end of the paper for a counterexample. If \eqref{eq:left positive divergence} is assumed, then, with probability one, there is a doubly infinite sequence $(\sigma_{n})_{n\in\Z}$ of ladder epochs determined through
$...<\sigma_{-1}<\sigma_{0}\le 0<\sigma_{1}<\sigma_{2}<...$ and $S_{\sigma_{n}}
>\sup_{j<\sigma_{n}}S_{j}$ for all $n\in\Z$. In particular,
\begin{align*}
\sigma_{1}\ &:=\ \inf\{k\ge 1:S_{k}>\sup_{j<k}S_{j}\},\\
\sigma_{0}\ &:=\ \sup\{k\le 0:S_{k}>\sup_{j<k}S_{j}\}.
\end{align*}
The reader should notice that $(\sigma_{n})_{n\ge 1}$ and $(\sgn)_{n\ge 1}$ are generally different although these sequences share the same recursive structure (see \eqref{eq:def sgn}):
$$ \sigma_{n}\ =\ \inf\{k>\sigma_{n-1}:S_{k}-S_{\sigma_{n-1}}>0\}. $$

Our main result can be viewed as a specialization of a similar result stated (without proof) by Lalley \cite[Section 4B]{Lalley:86} for general random walks with integrable stationary increments.

\begin{Theorem}\label{thm:first result}
Let $(M_{n},S_{n})_{n\ge 0}$ be a MRW having positive recurrent discrete driving chain $(M_{n})_{n\ge 0}$ with state space $\cS$, transition matrix $P=(p_{ij})_{i,j\in\cS}$ and stationary distribution $\pi=(\pi_{i})_{i\in\cS}$. Suppose that the dual $({}^{\#}S_{n})_{n\ge 0}$ is positive divergent in the sense of \eqref{eq:left positive divergence}. Then the ladder chain $\Mg=(\Mgn)_{n\ge 0}$ possesses the unique stationary distribution $\pi^{>}=(\pi_{i}^{>})_{i\in\cS}$, defined by
\begin{align}\label{eq:pig first identity}
\pi_{i}^{>}\ :=\ \frac{1}{c}\,\Erw_{\xi}\left({1\over\sigma_{1}-\sigma_{0}}\1_{\{M_{\sigma_{0}}=i\}}\right)\ =\ \frac{1}{c}\,\Prob_{\xi}(M_{0}=i,\sigma_{0}=0),
\end{align}
and has $\pi$-density $f(i)=c^{-1}\,\Prob_{\xi}(\sigma_{0}=0|M_{0}=i)$ for $i\in\cS$. Here $\xi$ equals the stationary law of $(M_{n},X_{n})_{n\in\Z}$ and
$$ c\ :=\ \Erw_{\xi}\left({1\over\sigma_{1}-\sigma_{0}}\right)\ =\ \Prob_{\xi}(\sigma_{0}=0)\ \in\  (0,1]. $$
Moreover, the ladder chain is positive recurrent on $\cS^{>}:=\{i\in\cS:\pi_{i}^{>}>0\}$, and
\begin{equation}\label{Mg ergodic}
\Prob_{i}\left(\tg(\cS^{>})<\infty\right)=1
\end{equation}
for all $i\in\cS$, where $\tg(\cS^{>}):=\inf\{n\ge 1:\Mgn\in\cS^{>}\}$.
\end{Theorem}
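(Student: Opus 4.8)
The plan is to build the stationary distribution of the ladder chain from the doubly infinite stationary sequence via Palm calculus, exploiting the self-similar recursive structure of the two-sided ladder epochs $(\sigma_n)_{n\in\Z}$. First I would observe that, under the hypothesis \eqref{eq:left positive divergence}, the gaps $\sigma_{n+1}-\sigma_n$ form a stationary sequence under $\Prob_\xi$ (this follows because the shift $\theta$ on the doubly infinite path commutes with the construction of the $\sigma_n$), and that this sequence is a.s.\ finite because $S_{\sigma_n}>\sup_{j<\sigma_n}S_j\to\infty$ forces infinitely many ladder points on both sides. The key structural fact is that the \emph{ladder chain started from its stationary law is itself a stationary Markov random walk}: shifting $(M_n,X_n)_{n\in\Z}$ by one ladder epoch is the same as shifting the ladder sequence by one index, so the ladder chain inherits a shift-invariant law. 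Palm theory then identifies that law: the stationary distribution $\pi^>$ of $(\Mgn)$ is the Palm measure of the marked point process of ladder epochs, which gives the first equality in \eqref{eq:pig first identity} — the factor $(\sigma_1-\sigma_0)^{-1}$ being the familiar length-biasing correction in the inversion formula (Palm inversion / Neveu's exchange formula). The second equality, $\Erw_\xi\big((\sigma_1-\sigma_0)^{-1}\1_{\{M_{\sigma_0}=i\}}\big)=\Prob_\xi(M_0=i,\sigma_0=0)$, is the cycle-formula identity: summing $1$ over the $\sigma_1-\sigma_0$ time-indices in a generic cycle and dividing by the cycle length recovers the event that the \emph{current} time is itself a ladder epoch.

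Next I would verify that the resulting $\pi^>$ is indeed stationary for the transition matrix $P^>$ of $(\Mgn)$, i.e.\ $\pi^>P^>=\pi^>$. This is where the Wiener--Hopf factorization for MRWs with discrete driving chain enters: it expresses the potential/transition structure of $(\Mgn)$ in terms of the pre-ladder and post-ladder decomposition of the path, and combined with the stationarity of $(M_n,X_n)_{n\in\Z}$ it yields the invariance of $\pi^>$ directly, as well as the normalization constant $c=\Prob_\xi(\sigma_0=0)=\Erw_\xi\big((\sigma_1-\sigma_0)^{-1}\big)$. That $c\in(0,1]$ is immediate: $c\le 1$ since it is a probability, and $c>0$ because $\Prob_\xi(\sigma_0=0)>0$ would fail only if the origin is a.s.\ never a ladder epoch, contradicting the a.s.\ existence of infinitely many ladder epochs in a stationary sequence (by stationarity each integer is a ladder point with the same positive probability). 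The $\pi$-density formula $f(i)=c^{-1}\Prob_\xi(\sigma_0=0\mid M_0=i)$ is then just \eqref{eq:pig first identity} rewritten using $\Prob_\xi(M_0=i)=\pi_i$.

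For the positive recurrence statement, once $\pi^>$ is known to be an invariant probability measure for the (irreducible on its support) chain $(\Mgn)$, positive recurrence on $\cS^>=\{i:\pi^>_i>0\}$ follows from standard Markov chain theory — a countable-state chain with an invariant probability measure supported on a single communicating class is positive recurrent there, and $\pi^>$ is its unique stationary law. The reachability assertion \eqref{Mg ergodic}, $\Prob_i(\tg(\cS^>)<\infty)=1$ for every $i\in\cS$, requires a separate coupling argument: starting the original MRW from an arbitrary state $i$, one couples its ladder sequence with the stationary (two-sided) version, for which ladder states in $\cS^>$ occur with frequency $1$; the coupling shows the non-stationary ladder chain must hit $\cS^>$ a.s. Here I expect the main obstacle to lie — not in the algebra of \eqref{eq:pig first identity}, which Palm inversion delivers almost formally, but in the rigorous justification that (i) the two-sided ladder construction is well-defined $\Prob_\xi$-a.s.\ and genuinely stationary under the hypothesis \eqref{eq:left positive divergence} alone (recall this is strictly weaker than $\mu>0$, and even $S_n\to\infty$ may fail), and (ii) the coupling for \eqref{Mg ergodic} actually succeeds when $(M_n)$ is only positive recurrent, with no moment assumption on the $X_n$. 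The delicate point is controlling the backward walk ${}^{\#}S_n$ so that the relevant ladder epochs are a.s.\ finite and the Palm calculus is legitimate; everything downstream is then bookkeeping with the Wiener--Hopf identity.
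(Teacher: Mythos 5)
Your proposal is correct in outline and largely parallels the paper's argument, with one step routed differently. The paper establishes the identity \eqref{eq:pig first identity} by exactly the cycle computation you describe: summing $1/(\sigma_{1}-\sigma_{0})$ over the $\sigma_{1}-\sigma_{0}$ possible positions of the origin inside a cycle, using shift-invariance of the events $\{M_{-k}=i,\sigma_{0}=-k,\sigma_{1}-\sigma_{0}=l\}$; the Palm-duality packaging you invoke is relegated there to Remark~\ref{rem:Palm calculus}. Where you diverge is the verification that $\pi^{>}$ is invariant: the paper does this by another direct shift computation, namely $\Prob_{\xi}(\sigma_{0}=0,\sigma_{1}=n,M_{0}=i,M_{n}=j)=\Prob_{\xi}(\sigma_{-1}=-n,\sigma_{0}=0,M_{-n}=i,M_{0}=j)$ summed over $i$ and $n$, whereas you route it through the Wiener--Hopf factorization, which is precisely the paper's Section~4 alternative. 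Be aware that the Wiener--Hopf route produces invariance of the vector $\left(\pi_{i}\,\Prob_{i}({}^{\#}\sigma^{\leqslant}=\infty)\right)_{i\in\cS}$ under $\|G^{>}\|$, so to conclude anything about $\pi^{>}$ as you defined it you must first prove the dual representation $\pi_{i}^{>}=c^{-1}\pi_{i}\,\Prob_{i}({}^{\#}\sigma^{\leqslant}=\infty)$ (Corollary~\ref{cor:first result}, a one-line time reversal); your sketch skips this bridge. The final block matches the paper's coupling: two copies of the modulated sequence started at arbitrary $i,j$ are merged at the meeting time of their driving chains, their ladder-epoch sets coincide from some a.s.\ finite index on, hence $\Prob_{i}(\Mgn=j\text{ i.o.})$ does not depend on $i$ for $j\in\cS^{>}$, which yields irreducibility on $\cS^{>}$, uniqueness of $\pi^{>}$, and \eqref{Mg ergodic} in one stroke. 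Two cautions: (i) do not assert ``irreducible on its support'' before the coupling --- an invariant probability charging a state makes that state positive recurrent, but a priori $\cS^{>}$ could split into several closed classes, and it is the coupling that rules this out and delivers uniqueness; (ii) the a.s.\ existence of the two-sided ladder sequence under \eqref{eq:left positive divergence} alone deserves an explicit argument, e.g.\ that the ladder set is a stationary point process on $\Z$ which is a.s.\ nonempty (since ${}^{\#}S_{n}\to\infty$ makes $\sup_{j\le N}S_{j}$ attained, and a leftmost maximizer is a ladder point), hence a.s.\ unbounded in both directions.
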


\begin{Rem}\label{rem1:condition (6)}\rm
As a consequence of Theorem \ref{thm:first result}, we see that \eqref{eq:left positive divergence} entails the existence of an ordinary two-sided subsequence $(S_{\tau_{n}})_{n\in\Z}$ having positive iid increments (choose the $\tau_{n}$ as those ladder epochs with $\tau_{0}\le 0<\tau_{1}$ having further $M_{\tau_{n}}=s$ for some fixed $s\in\cS^{>}$) and therefore $\limsup_{n\to\infty}S_{n}=\infty$ a.s. which is weaker than positive divergence of $(S_{n})_{n\ge 0}$.
\end{Rem}

\begin{Rem}\label{rem2:condition (6)}\rm
Let us further point out that \eqref{eq:left positive divergence} is sufficient but not necessary for the positive recurrence of the ladder chain on some $\cS^{>}\subset\cS$. As an example consider a MRW $(M_{n},S_{n})_{n\ge 0}$ with positive recurrent driving chain such that, for some unique $s\in\cS$, the $F_{is}$, $i\in\cS$, are concentrated on $\R_{>}$, while all other $F_{ij}$ are concentrated on $\R_{<}$. It is not difficult to further arrange for stationary drift $\mu=\Erw_{\pi}X_{1}=0$ and nontriviality in the sense that $\Prob_{\pi}(X_{1}=g(M_{1})-g(M_{0}))<1$ for any function $g:\cS\to\R$, giving
$$ \liminf_{n\to\infty}S_{n}\ =\ -\infty\quad\text{and}\quad\limsup_{n\to\infty}S_{n}\ =\ \infty\quad\text{a.s.} $$
(see \cite[Thms. 2 and 3]{Alsmeyer:01} or \cite{AlsBuck:16}) and thus the same for its dual $({}^{\#}S_{n})_{n\ge 0}$. On the other hand, since $X_{\sgn}$ must be positive for each $n$, we find that $\Mg_{n}=s$ for all $n\ge 1$ and so the ladder chain is trivially positive recurrent on $\cS^{>}=\{s\}$.
\end{Rem}

The following simple corollary provides an alternative definition of the $\pi_{i}^{>}$ with the help of the weakly descending ladder epochs of the dual MRW $({}^{\#}M_{n},{}^{\#}S_{n})_{n\ge 0}$, defined by ${}^{\#}\sle_{0}:=0$ and
$$ {}^{\#}\slen\ :=\ \inf\{k>{}^{\#}\sle_{n-1}:{}^{\#}S_{k}\le {}^{\#}S_{{}^{\#}\sle_{n-1}}\} $$
for $n\ge 1$, where $\inf\emptyset:=\infty$ and ${}^{\#}\sle:={}^{\#}\sle_{1}$.

\begin{Cor}\label{cor:first result}
Under the assumptions of Theorem \ref{thm:first result}, it further holds that
\begin{equation}\label{eq:pig second identity}
\pi_{i}^{>}\ =\ \frac{1}{c}\,\pi_{i}\,\Prob_{i}\left({}^{\#}\sle=\infty\right)\ =\ \frac{1}{c}\,\Prob_{\pi}\left({}^{\#}M_{0}^{\leqslant}=i,{}^{\#}\sle=\infty\right)
\end{equation}
for all $i\in\cS$, giving
$$ c\ :=\ \Prob_{\pi}\left({}^{\#}\sle=\infty\right) $$
and 
\begin{equation}\label{eq:form of S^>}
\cS^{>}\ =\ \left\{i\in\cS:\Prob_{i}\left({}^{\#}\sle=\infty\right)>0\right\}.
\end{equation}
\end{Cor}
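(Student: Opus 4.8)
The plan is to obtain both identities in \eqref{eq:pig second identity} directly from Theorem~\ref{thm:first result} by rewriting the event $\{\sigma_{0}=0\}$ in terms of the dual walk. First I would observe that, $\Prob_{\xi}$-a.s.,
$$ \{\sigma_{0}=0\}\ =\ \Big\{S_{0}>\sup_{j<0}S_{j}\Big\}\ =\ \{S_{-n}<0\text{ for all }n\ge 1\}\ =\ \{{}^{\#}S_{n}>0\text{ for all }n\ge 1\}, $$
where the first equality uses that a set of integers bounded above by $0$ whose supremum equals $0$ must contain $0$, and the last uses $S_{0}=0$ together with ${}^{\#}S_{n}=-S_{-n}$. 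On the other hand, since ${}^{\#}\sle={}^{\#}\sle_{1}=\inf\{k\ge 1:{}^{\#}S_{k}\le{}^{\#}S_{0}\}$ and ${}^{\#}S_{0}=0$, we also have $\{{}^{\#}\sle=\infty\}=\{{}^{\#}S_{n}>0\text{ for all }n\ge 1\}$, so the two events coincide as subsets of their respective path spaces.

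Next I would match the underlying laws. Under $\Prob_{\xi}$ the backward sequence $({}^{\#}M_{n},{}^{\#}X_{n})_{n\ge 0}=(M_{-n},X_{-n+1})_{n\ge 0}$ is, by construction, a stationary Markov chain with transition kernel ${}^{\#}Q$ from \eqref{eq:dual kernel} and driving chain $({}^{\#}M_{n})_{n\ge 0}$ stationary with law $\pi$; hence conditioning on ${}^{\#}M_{0}=M_{0}=i$ yields exactly the dual MRW started from state $i$ (the auxiliary value ${}^{\#}X_{0}$ being irrelevant for $({}^{\#}S_{n})_{n\ge 1}$). Consequently $\Prob_{\xi}(\,\cdot\mid M_{0}=i)$ and $\Prob_{i}$ (for the dual MRW) agree on $\sigma\big(({}^{\#}M_{n})_{n\ge 0},({}^{\#}S_{n})_{n\ge 1}\big)$, so that $\Prob_{\xi}(\sigma_{0}=0\mid M_{0}=i)=\Prob_{i}({}^{\#}\sle=\infty)$. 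Since the $M_{0}$-marginal of $\xi$ is $\pi$ and all $\pi_{i}$ are positive, the $\pi$-density formula $f(i)=c^{-1}\Prob_{\xi}(\sigma_{0}=0\mid M_{0}=i)$ of Theorem~\ref{thm:first result} becomes $\pi_{i}^{>}=\pi_{i}f(i)=c^{-1}\pi_{i}\Prob_{i}({}^{\#}\sle=\infty)$, which is the first equality in \eqref{eq:pig second identity}; the second equality holds because ${}^{\#}\sle_{0}=0$ forces ${}^{\#}M_{0}^{\leqslant}={}^{\#}M_{0}$, whence $\pi_{i}\Prob_{i}({}^{\#}\sle=\infty)=\Prob_{\pi}({}^{\#}M_{0}^{\leqslant}=i,{}^{\#}\sle=\infty)$.

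Finally, summing \eqref{eq:pig second identity} over $i\in\cS$ and using that $\pi^{>}$ is a probability distribution (Theorem~\ref{thm:first result}) gives $c=\sum_{i\in\cS}\pi_{i}\Prob_{i}({}^{\#}\sle=\infty)=\Prob_{\pi}({}^{\#}\sle=\infty)$, while $\cS^{>}=\{i\in\cS:\pi_{i}^{>}>0\}=\{i\in\cS:\pi_{i}\Prob_{i}({}^{\#}\sle=\infty)>0\}=\{i\in\cS:\Prob_{i}({}^{\#}\sle=\infty)>0\}$ because every $\pi_{i}>0$, which is \eqref{eq:form of S^>}. I expect the only delicate point to be the duality bookkeeping: one must check that the \emph{strict} level constraint defining $\{\sigma_{0}=0\}$ is correctly matched by the \emph{weakly} descending ladder epoch ${}^{\#}\sle$ — the two agree precisely because $\{{}^{\#}\sle=\infty\}$ is the event of never returning weakly to the starting level $0$ — and that $\Prob_{i}$ for the dual MRW coincides, on the forward $\sigma$-field of the dual, with conditioning the stationary two-sided model on $M_{0}=i$.
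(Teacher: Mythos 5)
Your argument is correct and is essentially the paper's own proof: both rewrite $\{\sigma_{0}=0\}$ as $\{S_{-n}<0\text{ for all }n\ge 1\}=\{\min_{n\ge 1}{}^{\#}S_{n}>0\}=\{{}^{\#}\sle=\infty\}$ and then identify $\Prob_{\xi}(\,\cdot\mid M_{0}=i)$ with the dual law $\Prob_{i}$ to get $\pi_{i}^{>}=c^{-1}\pi_{i}\,\Prob_{i}({}^{\#}\sle=\infty)$. Your additional explicit derivations of $c=\Prob_{\pi}({}^{\#}\sle=\infty)$ and of \eqref{eq:form of S^>} by summation and positivity of the $\pi_{i}$ are exactly what the paper leaves implicit.
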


\begin{proof}
It suffices to point out that
\begin{align*}
\Prob_{\xi}(M_{0}=i,\sigma_{0}=0)\ &=\ \Prob_{\xi}\left(M_{0}=i,\,\max_{n\ge 1}S_{-n}<0=S_{0}\right)\\
&=\ \Prob_{\xi}\left({}^{\#}M_{0}=i,\,\min_{n\ge 1}{}^{\#}S_{n}>0\right)\\
&=\ \pi_{i}\,\Prob_{i}\left({}^{\#}\sle=\infty\right)
\end{align*}
for all $i\in\cS$.\qed
\end{proof}

Instead of the second equality in \eqref{eq:pig first identity}, we will actually verify the more general one
\begin{equation}\label{eq:pig first identity generalized}
\frac{1}{cm}\,\Prob_{\xi}(M_{\sigma_{0}}=i,\sigma_{1}-\sigma_{0}=m)\ =\ \frac{1}{c}\,\Prob_{\xi}(M_{0}=i,\sigma_{0}=0,\sigma_{1}=m)
\end{equation}
for all $(i,m)\in\cS\times\N$ in the proof of Theorem \ref{thm:first result} below. Using this identity and the obvious fact that $(\Mgn,\sg_{n+1}-\sgn)_{n\ge 0}$ is a stationary Markov chain under $\Prob_{\pi^{>}}$, we are led to the following corollary.

\begin{Cor}\label{cor:ladder epochs}
Under the assumptions of Theorem \ref{thm:first result}, it further holds that the law $\nu=(\nu_{i,m})_{(i,m)\in\cS\times\N}$ of $(\Mgn,\sg_{n+1}-\sgn)$ under $\Prob_{\pi^{>}}$ is given by
\begin{align}
\begin{split}\label{eq:def on nu}
\nu_{i,m}\ &=\ \frac{1}{cm}\,\Prob_{\xi}(M_{\sigma_{0}}=i,\sigma_{1}-\sigma_{0}=m)
\end{split}
\end{align}
In particular,
\begin{align}
\Prob_{\pi^{>}}(\sg=m)\ &=\ \sum_{i\in\cS}\nu_{i,m}\ =\ \frac{\Prob_{\xi}(\sigma_{1}-\sigma_{0}=m)}{cm}\label{eq:def of nu}
\shortintertext{and}
\Erw_{\pi^{>}}\sg\ &=\ \frac{1}{\Prob_{\pi}\left({}^{\#}\sle=\infty\right)}.\label{eq:Esg}
\end{align}
\end{Cor}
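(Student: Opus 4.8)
The plan is to obtain Corollary~\ref{cor:ladder epochs} from the identity \eqref{eq:pig first identity generalized} --- which is verified in the proof of Theorem~\ref{thm:first result} --- together with the elementary fact that $(\Mgn,\sg_{n+1}-\sgn)_{n\ge 0}$ forms a stationary Markov chain on $\cS^{>}\times\N$ under $\Prob_{\pi^{>}}$. The latter holds because $(\Mgn,\sgn)_{n\ge 0}$ is a MRW with driving chain $\Mg$, so the increments $\sg_{n+1}-\sgn$ are conditionally independent given $\Mg$ with distribution depending only on $(\Mgn,\Mg_{n+1})$; they are a.s.\ finite since $\Mg$ is positive recurrent on $\cS^{>}$ by Theorem~\ref{thm:first result}, and the chain is stationary because $\Mg$ is under $\Prob_{\pi^{>}}$. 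Consequently the law $\nu$ of $(\Mgn,\sg_{n+1}-\sgn)$ does not depend on $n$, hence coincides with the law of $(\Mg_0,\sg_1-\sg_0)=(M_0,\sg)$ under $\Prob_{\pi^{>}}$, giving
$$\nu_{i,m}\ =\ \Prob_{\pi^{>}}(M_0=i,\,\sg=m)\ =\ \pi_i^{>}\,\Prob_i(\sg=m)$$
for all $(i,m)\in\cS\times\N$ (both sides vanishing unless $i\in\cS^{>}$).

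Next I would connect $\pi_i^{>}\,\Prob_i(\sg=m)$ with the right-hand side of \eqref{eq:def on nu}. The key observation is that on the event $\{\sigma_0=0\}=\{\sup_{n\ge1}S_{-n}<0=S_0\}$ the two-sided ladder epoch $\sigma_1$ agrees with the one-sided one: for every $k\ge1$ the running supremum $\sup_{j<k}S_j$ reduces to $\max\{0,S_1,\dots,S_{k-1}\}$, so the recursion \eqref{eq:def sgn} for $n=1$ becomes the definition of $\sg$, i.e.\ $\sigma_1=\sg$ there. Now $\{\sigma_0=0\}$ is measurable with respect to the ``past'' $\sigma$-field $\sigma(M_n,X_n:n\le0)$ while $\{\sg=m\}$ is measurable with respect to the ``future'' $\sigma$-field $\sigma(M_n,X_n:n\ge1)$; by the Markov property of $(M_n,X_n)_{n\in\Z}$ these two are conditionally independent given $M_0$, and the conditional law of the future given $M_0=i$ is that of the forward MRW under $\Prob_i$. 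Therefore
$$\Prob_\xi(M_0=i,\sigma_0=0,\sigma_1=m)\ =\ \pi_i\,\Prob_\xi(\sigma_0=0\mid M_0=i)\,\Prob_i(\sg=m),$$
and since $\pi_i^{>}=c^{-1}\Prob_\xi(M_0=i,\sigma_0=0)=c^{-1}\pi_i\,\Prob_\xi(\sigma_0=0\mid M_0=i)$ by \eqref{eq:pig first identity}, the right-hand side equals $c\,\pi_i^{>}\Prob_i(\sg=m)=c\,\nu_{i,m}$. Combined with \eqref{eq:pig first identity generalized} this yields \eqref{eq:def on nu}.

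The two remaining assertions then follow routinely: summing \eqref{eq:def on nu} over $i\in\cS$ gives \eqref{eq:def of nu}, and since $\sigma_1-\sigma_0<\infty$ $\Prob_\xi$-a.s.\ under \eqref{eq:left positive divergence} one obtains
$$\Erw_{\pi^{>}}\sg\ =\ \sum_{m\ge1}m\,\Prob_{\pi^{>}}(\sg=m)\ =\ \frac1c\sum_{m\ge1}\Prob_\xi(\sigma_1-\sigma_0=m)\ =\ \frac1c,$$
whereupon substituting $c=\Prob_\pi({}^{\#}\sle=\infty)$ from Corollary~\ref{cor:first result} proves \eqref{eq:Esg}.

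I expect the only genuinely delicate step to be the factorization in the second paragraph: one must carefully split off the ``past'' event $\{\sigma_0=0\}$ from the ``future'' event $\{\sg=m\}$ --- which is exactly what the identification $\sigma_1=\sg$ on $\{\sigma_0=0\}$ makes possible --- and then invoke conditional independence given $M_0$ together with the stationarity of $(M_n,X_n)_{n\in\Z}$. Everything else is bookkeeping, the substantial content being already contained in the previously established identity \eqref{eq:pig first identity generalized}.
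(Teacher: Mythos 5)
Your proof is correct and follows essentially the same route as the paper's: stationarity reduces everything to $\nu_{i,m}=\pi_{i}^{>}\,\Prob_{i}(\sg=m)$, the factorization $\Prob_{\xi}(M_{0}=i,\sigma_{0}=0)\,\Prob_{i}(\sg=m)=\Prob_{\xi}(M_{0}=i,\sigma_{0}=0,\sigma_{1}=m)$ is invoked (you justify it explicitly via the identification $\sigma_{1}=\sg$ on $\{\sigma_{0}=0\}$ and conditional independence given $M_{0}$, which the paper leaves implicit), and then \eqref{eq:pig first identity generalized} yields \eqref{eq:def on nu}, with \eqref{eq:def of nu} and \eqref{eq:Esg} following by summation. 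No gaps.
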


\begin{proof}
We have that
\begin{align*}
\nu_{i,m}\ &=\ \Prob_{\pi^{>}}(\Mg_{0}=i,\sg=m)\\
&=\ \pi_{i}^{>}\,\Prob_{i}(\sg=m)\\
&=\ \frac{1}{c}\Prob_{\xi}(M_{0}=i,\sigma_{0}=0)\,\Prob_{i}(\sg=m)\\
&=\ \frac{1}{c}\Prob_{\xi}(M_{0}=i,\sigma_{0}=0,\sigma_{1}=m)\\
&=\frac{1}{cm}\,\Prob_{\xi}(M_{\sigma_{0}}=i,\sigma_{1}-\sigma_{0}=m)
\end{align*}
as claimed. The remaining assertions are now obvious.\qed
\end{proof}

Since $\pi_{i}^{>}\,\Erw_{i}\sg\le\Erw_{\pi^{>}}\sg$ and $\pi_{i}^{>}=c^{-1}\pi_{i}\,\Prob_{i}({}^{\#}\sle=\infty)$ for any $i\in\cS$, identity \eqref{eq:Esg} further provides us with
\begin{equation}
\Erw_{i}\sg\ \le\ \frac{1}{\pi_{i}\,\Prob_{i}\left({}^{\#}\sle=\infty\right)}
\end{equation}
for all $i\in\cS$ with the right-hand side being finite only for $i\in\cS^{>}$.

\vspace{.2cm}
A direct proof of Theorem \ref{thm:first result} will be presented in the following section, but its most critical part, namely the existence of $\pi^{>}$, could also be deduced by drawing on a more general duality result from Palm calculus as described in the monographies by Thorisson \cite[Ch.~8]{Thorisson:00} and Sigman \cite{Sigman:95}. We refer to Remark \ref{rem:Palm calculus} for a sketch of details. Yet another proof via the Wiener-Hopf factorization for MRW's is briefly shown in Section \ref{sec:WHF approach}, followed by a short treatment of the case when the driving chain is null recurrent in Section \ref{sec:null recurrent case}, see Theorem \ref{thm:second result} there.

\section{Proof of Theorem \ref{thm:first result}}\label{sec:proof first theorem}

We start by showing equality of the two expressions for $\pi_{i}^{>}$ in \eqref{eq:pig first identity}. We have
\begin{align*}
\Erw_{\xi}\Bigg({1\over\sigma_{1}-\sigma_{0}}&\1_{\{M_{\sigma_{0}}=i\}}\Bigg)\\
&=\ \sum_{k\ge 0}\,\sum_{l>k}\frac{1}{l}\,\Prob_{\xi}(M_{-k}=i,\sigma_{0}=-k,\sigma_{1}-\sigma_{0}=l)\\
&=\ \sum_{k\ge 0}\,\sum_{l>k}\frac{1}{l}\,\Prob_{\xi}(M_{0}=i,\sigma_{0}=0,\sigma_{1}-\sigma_{0}=l)\\
&=\ \sum_{l\ge 1}\Prob_{\xi}(M_{0}=i,\sigma_{0}=0,\sigma_{1}-\sigma_{0}=l)\sum_{k=0}^{l-1}\frac{1}{l}\\
&=\ \sum_{l\ge 1}\Prob_{\xi}(M_{0}=i,\sigma_{0}=0,\sigma_{1}-\sigma_{0}=l)\\
&=\ \Prob_{\xi}(M_{0}=i,\sigma_{0}=0),
\end{align*}
where the third line follows from the stationarity of $(M_{n},X_{n})_{n\in\Z}$ (under $\Prob_{\xi}$) which provides us with
\begin{align*}
&\{M_{-k}=i,\sigma_{0}=-k,\sigma_{1}-\sigma_{0}=l\}\\
&=\ \left\{M_{-k}=i,\,S_{-k}>\max_{j>k}S_{-j},\,\max_{1\le j<l}S_{-k+j}\le S_{-k},\,S_{-k+l}>\max_{0\le j<l}S_{-k+j}\right\}\\
&=\ \left\{M_{-k}=i,\,\min_{j>k}\sum_{n=-j}^{-k}X_{n}>0,\,\max_{1\le j<l}\sum_{n=-k+1}^{-k+j}X_{n}\le 0,\,\min_{1\le j\le l}\sum_{n=-k+j}^{-k+l}X_{n}>0\right\}\\
&\eqdist\  \left\{M_{0}=i,\,\min_{j>0}\sum_{n=-j}^{0}X_{n}>0,\,\max_{1\le j<l}\sum_{n=1}^{j}X_{n}\le 0,\,\min_{1\le j\le l}\sum_{n=j}^{l}X_{n}>0\right\}\\
&=\ \{M_{0}=i,\sigma_{0}=0,\sigma_{1}-\sigma_{0}=l\}
\end{align*}
for all $k\ge 0$. Here $A\eqdist B$ means that $\1_{A}$ and $\1_{B}$ have the same law.

\vspace{.1cm}
Since
$$ \pi_{i}^{>}\ =\ \frac{1}{c}\,\Prob_{\xi}(\sigma_{0}=0|M_{0}=i)\,\Prob_{\xi}(M_{0}=i)\ =\ \frac{1}{c}\,\Prob_{i}(\sigma_{0}=0)\,\pi_{i} $$
we also obtain the asserted form of the $\pi$-density von $\pi^{>}$.

\vspace{.1cm}
The next step is to verify that $\pi^{>}$ defines a stationary distribution for $(\Mg_{n})_{n\in\Z}$. By a similar argument as before, we obtain
\begin{align*}
\Prob_{\pi^{>}}(\Mg_{1}=j)\ &=\ \sum_{i\in\cS}\pi_{i}^{>}\,\Prob_{i}(\Mg_{1}=j)\\
&=\ \frac{1}{c}\,\sum_{i\in\cS}\Prob_{\xi}(\sigma_{0}=0,M_{0}=i)\,\Prob_{i}(\Mg_{1}=j)\\
&=\ \frac{1}{c}\,\sum_{i\in\cS}\sum_{n\ge 1}\Prob_{\xi}(\sigma_{0}=0,\sigma_{1}=n,M_{0}=i,M_{n}=j)\\
&=\ \frac{1}{c}\,\sum_{i\in\cS}\sum_{n\ge 1}\Prob_{\xi}(\sigma_{-1}=-n,\sigma_{0}=0,M_{-n}=i,M_{0}=j)\\
&=\ \frac{1}{c}\,\sum_{i\in\cS}\Prob_{\xi}(\Mg_{-1}=i,\sigma_{0}=0,M_{0}=j)\\
&=\ \frac{1}{c}\,\Prob_{\xi}(\sigma_{0}=0,M_{0}=j)\ =\ \pi_{j}^{>}
\end{align*}
for all $j\in\cS$, and this yields the desired result, for $(\Mg_{n})_{n\in\Z}$ is a Markov chain under $\Prob_{\xi}$.

\vspace{.1cm}
Clearly, all states in $\cS^{>}$ are positive recurrent for the ladder chain. A coupling argument will now be used to establish the remaining assertions including uniqueness of $\pi^{>}$ as a stationary distribution of $(\Mgn)_{n\ge 0}$. On a possibly enlarged probability space with underlying probability measure $\Prob$, let $(M_{n}',X_{n}')_{n\ge 0}$ and $(M_{n}'',X_{n}'')_{n\ge 0}$ be two Markov-modulated sequences with the same transition kernel as $(M_{n},X_{n})_{n\ge 0}$ and initial conditions
$$ (M_{0}',M_{0}'')=(i,j)\quad\text{and}\quad X_{0}'=X_{0}''=0 $$
for arbitrarily fixed $i,j\in\cS$. As usual, the associated RW's are denoted by $(S_{n}')_{n\ge 0}$ and $(S_{n}'')_{n\ge 0}$, the corresponding strictly ascending ladder epochs by $\sigma_{n}'$ and $\sigma_{n}''$, respectively, where $\sigma_{0}'=\sigma_{0}'':=0$.

\vspace{.1cm}
Let $T$ be the $\Prob$-a.s.\ finite coupling time of $(M_{n}',M_{n}'')_{n\ge 0}$, thus
$$ T\ =\ \inf\{n\ge 0:M_{n}'=M_{n}''\}, $$
and put $Y':=\max_{0\le n\le T}S_{n}'$, $Y'':=\max_{0\le n\le T}S_{n}''$ and $Y:=Y'\vee Y''$.
Then the coupling process
\begin{equation*}
(\wh{M}_{n},\wh{X}_{n})\ :=\ 
\begin{cases}
\hfill (M_{n}',X_{n}'),&\text{falls }n\le T,\\
(M_{n}'',X_{n}''),&\text{falls }n>T,
\end{cases}
\end{equation*}
forms a copy of $(M_{n}',X_{n}')_{n\ge 0}$ and coincides with $(M_{n}'',X_{n}'')_{n\ge 0}$ after $T$. Moreover, $\wh{S}_{n}=S_{n}'$ for $n\le T$ and $\wh{S}_{n}=S_{n}''+(S_{T}'-S_{T}'')$ for $n>T$. Denoting by $(\wh{\sigma}_{n})_{n\ge 0}$ the sequence of strictly ascending ladder epochs of $(\wh{S}_{n})_{n\ge 0}$, the crucial observation now is that the random sets $\{\sigma_{n}'',\,n\ge 0\}$ and $\{\wh{\sigma}_{n},\,n\ge 0\}$ a.s. coincide up to finitely many elements. Namely, if
\begin{align*}
\tau\ :=\ \inf\{n:S_{\sigma_{n}''}''>Y+(S_{T}''-S_{T}')^{+}\},\\
\rho\ :=\ \inf\{n:\wh{S}_{\wh{\sigma}_{n}}>Y+(S_{T}'-S_{T}'')^{+}\},
\end{align*}
then $\sigma_{\tau}''=\wh{\sigma}_{\rho}>T$ and thus $\sigma_{\tau+n}''=\wh{\sigma}_{\rho+n}$ for all $n\ge 1$ because $(S_{n}'')_{n\ge 0}$ and $(\wh{S}_{n})_{n\ge 0}$ have the same increments after $T$.

\vspace{.1cm}
To complete the proof, put ${M_{n}''}^{>}:=M_{\sigma_{n}''}''$, $\wh{M}_{n}^{>}:=\wh{M}_{\wh{\sigma}_{n}}$ for $n\ge 0$ and notice that
$$ ({M_{n}''}^{>})_{n\ge\tau}\ =\ (\wh{M}_{n}^{>})_{n\ge\rho} $$
for any choice of $i,j\in\cS$. Consequently,
\begin{align*}
\Prob_{i}(\Mg_{n}=j\text{ i.o.})\ &=\ \Prob(\wh{M}_{n}^{>}=j\text{ i.o.})
\ =\ \Prob(\wh{M}_{\rho+n}^{>}=j\text{ i.o.})\\
&=\ \Prob({M_{\tau+n}''}\hspace{-9pt}{}^{>}=j\text{ i.o.})
\ =\ \Prob({M_{n}''}^{>}=j\text{ i.o.})\\
&=\ \Prob_{j}(\Mg_{n}=j\text{ i.o.})
\end{align*}
for all $i\in\cS$ and $j\in\cS^{>}$ which shows the irreducibility of the ladder chain on $\cS^{>}$ as well as \eqref{Mg ergodic}.\qed

\begin{Rem}\label{rem:Palm calculus}\rm
Let us briefly describe how the Theorem \ref{thm:first result} fits into the framework of Palm calculus as laid out in \cite[Ch.~8]{Thorisson:00}. Our starting point is here the stationary sequence $(M_{n},X_{\leqslant n})_{n\in\Z}$ under $\Prob_{\xi}$, where $X_{\leqslant n}:=(X_{k})_{k\le n}$. Notice that
$$ D_{m}\ :=\ S_{m}-\max_{k\le m}S_{k}\ =\ \min_{k\le m}\sum_{j=k+1}^{m}X_{j} $$
is a functional of $X_{\leqslant m}$ which equals 0 iff $m$ is a strictly ascending ladder epoch of the associated doubly infinite MRW $(M_{n},S_{n})_{n\in\Z}$. In other words, the sequence $\bS:=(\sigma_{n})_{n\in\Z}$ of ladder epochs may be viewed as the sequence of return times to $\cS\times\{0\}$ of the stationary sequence $\bZ:=(M_{n},D_{n})_{n\in\Z}$, and it is a functional of it. For $n\in\Z$, define the two-sided shift
$$ \theta^{n}\big((z_{k})_{k\in\Z},(t_{k})_{k\in\Z}\big)\ :=\ \big((z_{n+k})_{k\in\Z},(t_{n,k})_{k\in\Z}\big) $$
for $n\in\Z$, $(z_{k})_{k\in\Z}\in(\cS\times\R_{\leqslant})^{\Z}$ and strictly increasing sequences $(t_{k})_{k\in\Z}$ such that
$$ -\infty \leftarrow \ldots < t_{-1} \le 0 < t_{1} < \ldots \to \infty $$
and $(t_{n,k})_{k\in\Z}$ equals the sequence $(t_{k}+n)_{k\in\Z}$ modulo relabeling so as to have $t_{n,0}\le 0<t_{n,1}$ (see also \cite[p.~251]{Thorisson:00}). Then the above considerations imply that the sequence
$$ (\theta^{n}(\bZ,\bS\,))_{n\in\Z} $$
is $\Prob_{\pi}$-stationary, and the Palm duality theory \cite[Theorem 8.4.1]{Thorisson:00} now tells us that its cycles
$$ C_{n}\ :=\ \left(\sigma_{n+1}-\sigma_{n},(M_{k},D_{k})_{\sigma_{n}\le k<\sigma_{n+1}}\right),\quad n\in\Z, $$
and the sequence $(M_{\sigma_{n}},\sigma_{n+1}-\sigma_{n})_{n\in\Z}$ in particular are stationary under the probability measure $\Prob_{\xi}^{0}$, defined by
$$ \Prob_{\xi}^{0}(dx)\ :=\ \frac{1}{c(\sigma_{1}-\sigma_{0})}\,\Prob_{\xi}(dx) $$
with $c$ as in Theorem \ref{thm:first result} and satisfying $\Prob_{\pi}^{0}(\sigma_{0}=0)=1$. Consequently,
$$ \pi_{i}^{>}\ =\ \Prob_{\xi}^{0}(M_{0}=i)\ =\ \frac{1}{c}\,\Erw_{\xi}\left(\frac{1}{\sigma_{1}-\sigma_{0}}\1_{\{M_{\sigma_{0}}=i\}}\right),\quad i\in\cS $$
is a stationary distribution for the ladder chain as asserted in our theorem.
\end{Rem}

\section{An alternative approach via Wiener-Hopf factorization}\label{sec:WHF approach}

That $\pi^{>}$ as defined in \eqref{eq:pig second identity} forms a stationary distribution of the ladder chain, may also be derived with the help of the Wiener-Hopf factorization for MRW's as we will briefly demonstrate after recalling some necessary facts about this factorization with reference to Asmussen \cite{Asmussen:89} and \cite[p.~314ff]{Asmussen:03}. Putting $({}^{\#}\Mlen,{}^{\#}\Slen):=({}^{\#}M_{{}^{\#}\slen},{}^{\#}S_{{}^{\#}\slen})$, we define the matrices
\begin{equation*}
G := (G_{ij})_{i,j\in\cS},\ G^{>}:=\big(G_{ij}^{>})_{i,j\in\cS},\ {}^{\#}G:=\big({}^{\#}G_{ij}\big)_{i,j\in\cS},\ {}^{\#}G:=\big({}^{\#}G_{ij}\big)_{i,j\in\cS}
\end{equation*}
with measure-valued entries by
\begin{align*}
&G_{ij}\ :=\ p_{ij}\,F_{ij}\ =\ \Prob_{i}(M_{1}=j,X_{1}\in\cdot),\\
&G_{ij}^{>}\ :=\ \Prob_{i}(\Mg_{1}=j,\Sg_{1}\in\cdot,\sg<\infty),\\
%{}^{\#}&G_{ij}\ :=\ \Prob_{i}\left({}^{\#}M_{1}=j,{}^{\#}X_{1}\in\cdot\right)={}^{\#}p_{ij}\,F_{ji}\ =\ \frac{\pi_{j}}{\pi_{i}}\,G_{ji},\\
{}^{\#}&G_{ij}^{\leqslant}\ :=\ \Prob_{i}\left({}^{\#}M_{1}^{\leqslant}=j,{}^{\#}\Sle_{1}\in\cdot,{}^{\#}\sle<\infty\right),\\
{}^{\star}&G_{ij}^{\leqslant}\ :=\ \frac{\pi_{j}}{\pi_{i}}\,{}^{\#}G_{ji}^{\leqslant}.
\end{align*}
The convolution of matrices $A=(A_{ij})_{i,j\in\cS},B=(B_{ij})_{i,j\in\cS}$ with measure-valued entries is defined in the usual manner by replacing ordinary multiplication with convolution of measures, thus
$A*B=\left(\sum_{k\in\cS}A_{ik}*B_{kj}\right)_{i,j\in\cS}$.
The following result, stated here for reference, provides the \emph{Wiener-Hopf factorization of a MRW with discrete driving chain} and is Theorem 4.1 in \cite{Asmussen:89}.

\begin{Prop}\label{prop:WHF for MRW}
Let $(M_{n},S_{n})_{n\ge 0}$ be a MRW with positive recurrent discrete driving chain $(M_{n})_{n\ge 0}$. Then
\begin{equation}\label{eq:WHF1}
\delta_{0}\,I-G\ =\ (\delta_{0}\,I-{}^{\star}G^{\leqslant})*(\delta_{0}\,I-G^{>})
\end{equation}
or, equivalently,
\begin{equation}\label{WHF2}
G\ =\ {}^{\star}G^{\leqslant}+G^{>}-{}^{\star}G^{\leqslant}*G^{>},
\end{equation}
where $I$ denotes the identity matrix on $\cS$.
\end{Prop}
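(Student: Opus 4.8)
The plan is to prove the Wiener–Hopf factorization \eqref{eq:WHF1} by a first-passage decomposition of the MRW, carried out simultaneously from the left (to produce the weakly descending dual factor ${}^{\star}G^{\leqslant}$) and from the right (to produce the strictly ascending factor $G^{>}$), and then to read off \eqref{WHF2} as a purely algebraic rearrangement of \eqref{eq:WHF1}. Since this is Theorem~4.1 of \cite{Asmussen:89}, stated here only for reference, I will only sketch the argument. Throughout I identify a matrix of sub-probability measures $A=(A_{ij})$ with the operator $f\mapsto (\sum_j A_{ij}*f_j)$ acting on bounded measurable vectors $f=(f_j)_{j\in\cS}$, and recall that $\delta_0 I$ is the identity for convolution.

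First I would fix $i\in\cS$ and decompose the path of $(M_n,S_n)_{n\ge 0}$ started at $M_0=i$, $S_0=0$ according to whether $S_n$ ever drops to or below $0$ again. The weakly descending ladder epoch ${}^{\#}\sle$ of the \emph{dual} chain corresponds, via the identity ${}^{\#}S_n=-S_{-n}$ and the time-reversal kernel \eqref{eq:dual kernel}, to the \emph{first weak decrease} of the original walk; the balance factor $\pi_j/\pi_i$ in the definition ${}^{\star}G^{\leqslant}_{ij}=(\pi_j/\pi_i)\,{}^{\#}G^{\leqslant}_{ji}$ is exactly what converts a dual-chain first-passage quantity back into a forward-chain object. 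Concretely, one shows that every excursion of the walk decomposes uniquely into an initial (possibly empty) stretch that stays strictly positive after time $0$ until it makes a strictly ascending ladder step, preceded by a (possibly empty) sequence of weak descents; iterating this decomposition and summing over the number of descent steps gives a geometric-type series whose closed form is $(\delta_0 I-{}^{\star}G^{\leqslant})^{-1}$ in the convolution-operator sense, and pairing it with the strictly ascending part yields $G=(\delta_0 I-{}^{\star}G^{\leqslant})^{-1}*G^{>}*(\text{correction})$; rearranging and clearing the inverse produces \eqref{eq:WHF1}. Passing from \eqref{eq:WHF1} to \eqref{WHF2} is then immediate: expanding the convolution product on the right of \eqref{eq:WHF1} gives $\delta_0 I-{}^{\star}G^{\leqslant}-G^{>}+{}^{\star}G^{\leqslant}*G^{>}$, and comparing with $\delta_0 I-G$ yields $G={}^{\star}G^{\leqslant}+G^{>}-{}^{\star}G^{\leqslant}*G^{>}$.

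The main obstacle is the bookkeeping required to make the path decomposition rigorous, in particular the duality identity that underlies ${}^{\star}G^{\leqslant}$. One must verify that, for each finite $n$ and each pair $i,j$, the Palm/time-reversal formula
\[
\pi_i\,\Prob_i\big(\text{first weak descent at time }n,\ M_n=j,\ S_n\in\cdot\big)\;=\;\pi_j\,\Prob_j\big({}^{\#}\sle=n,\ {}^{\#}M_n=i,\ {}^{\#}S_n\in(-\,\cdot)\big)
\]
holds, which is precisely the content of reversing the $n$-step kernel $Q$ into ${}^{\#}Q$ together with a sign flip of the increments; this is standard for positive recurrent discrete driving chains but is where all the care goes. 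The convergence of the geometric series over the number of descent steps is not an issue at the level of measures, since for each fixed total time horizon only finitely many terms contribute, so the identity \eqref{eq:WHF1} is really an identity of formal sums that happens to converge entrywise; one may alternatively evaluate both sides on test functions of the form $\1_{\{M_n=j\}}e^{\theta S_n}$ and appeal to analyticity in $\theta$, but for the present purposes the combinatorial argument suffices. Since the statement is quoted verbatim from \cite{Asmussen:89}, I would in the paper simply cite that reference for the full details and content myself with the sketch above.
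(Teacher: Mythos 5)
The paper does not actually prove this proposition: it is quoted ``for reference'' as Theorem 4.1 of \cite{Asmussen:89}, so your decision to cite that source and only sketch the argument matches what the paper does, and your final algebraic step (expanding the right-hand side of \eqref{eq:WHF1} to obtain \eqref{WHF2}) is correct. The global architecture of the sketch --- a first-passage decomposition combined with time reversal of the driving chain, with convergence being a non-issue because only finitely many terms contribute at each fixed time horizon --- is also the right one.

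The problem is that the one identity you actually display, the alleged correspondence between ``first weak descent of the forward walk at time $n$'' and $\{{}^{\#}\sle=n\}$ for the dual walk with the measure reflected through $0$, is wrong on both counts, and it sits exactly at the point you yourself flag as ``where all the care goes''. First, there is no sign flip: the dual kernel \eqref{eq:dual kernel} reverses the driving chain but keeps the increment laws $F_{ji}$ unreflected, so ${}^{\#}G^{\leqslant}_{ji}$ and ${}^{\star}G^{\leqslant}_{ij}$ live on $(-\infty,0]$ just like the forward quantities they are matched with; the relation ${}^{\#}S_n=-S_{-n}$ is an artifact of the indexing convention \eqref{eq:def doubly infinite RW} in the doubly infinite construction, not a property of ${}^{\#}Q$. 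Second, time reversal does not map first weak descents to first weak descents: writing ${}^{\#}S_m=S_n-S_{n-m}$ along a reversed $n$-step path, the dual event $\{{}^{\#}\sle=n\}$ corresponds to the forward event $\{S_n\le 0,\ S_k<S_n\text{ for }1\le k\le n-1\}$ (a strict new maximum of nonpositive value), not to $\{S_1>0,\dots,S_{n-1}>0,\ S_n\le 0\}$; indeed, in general ${}^{\star}G^{\leqslant}_{ij}$ does \emph{not} equal the forward chain's own first-weak-descent kernel, which is precisely why the dual chain must enter the factorization at all. The duality one actually needs is the occupation-time identity
\[
\sum_{n\ge 0}\Prob_{i}\big(M_{n}=j,\ S_{n}\in\cdot\,,\ \sg>n\big)\ =\ \sum_{m\ge 0}\big({}^{\star}G^{\leqslant}\big)^{*m}_{ij},
\]
which follows because $\{\sg>n\}=\{S_{k}\le 0,\ 1\le k\le n\}$ reverses into ``$n$ is a weak descending ladder epoch of the dual walk'', together with the elementary last-exit relation $U*(\delta_{0}I-G)=\delta_{0}I-G^{>}$ for $U_{ij}:=\sum_{n\ge 0}\Prob_{i}(M_{n}=j,S_{n}\in\cdot,\sg>n)$; convolving the latter on the left with $\delta_{0}I-{}^{\star}G^{\leqslant}$ gives \eqref{eq:WHF1} with no unspecified ``correction'' factor. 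As written, your sketch would not assemble into a proof without replacing the displayed duality by this one.
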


Let us also state the following lemma in which $\|\nu\|$ denotes the total mass of a measure $\nu$.

\begin{Lemma}\label{lem:auxiliary}
Under the assumptions of Theorem \ref{thm:first result}, the matrix $\|{}^{\#}G^{\leqslant}\|:=(\|{}^{\#}G_{ij}^{\leqslant}\|)_{i,j\in\cS}$ is truly substochastic, i.e.
$$ \sum_{j\in\cS}\|{}^{\#}G_{ij}^{\leqslant}\|\ \le\ 1 $$
for all $i\in\cS$ with strict inequality for at least one $i$. Furthermore,
\begin{equation}\label{f980}
\sum_{i\in\cS}\pi_{i}\,\|{}^{\star}G_{ij}^{\leqslant}\|\ =\ \pi_{j}\,\Prob_{j}({}^{\#}\sle<\infty)\ \le\ \pi_{j}
\end{equation}
for all $j\in\cS$ with strict inequality for at least one $j$.
\end{Lemma}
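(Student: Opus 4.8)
The plan is to prove the two assertions essentially independently, the first by a direct probabilistic argument and the second by a summation-against-$\pi$ computation that reduces it to the first via the definition of ${}^{\star}G^{\leqslant}$. For the first assertion, note that $\sum_{j\in\cS}\|{}^{\#}G_{ij}^{\leqslant}\| = \Prob_{i}({}^{\#}\sle<\infty)$, since the measures ${}^{\#}G_{ij}^{\leqslant}$, summed over $j$, give the law of $({}^{\#}M_{1}^{\leqslant},{}^{\#}S_{1}^{\leqslant})$ on the event $\{{}^{\#}\sle<\infty\}$. So the claimed inequality $\le 1$ is trivial, and truly substochastic means precisely that $\Prob_{i}({}^{\#}\sle=\infty)>0$ for at least one $i$. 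But this is exactly the content of the assumption \eqref{eq:left positive divergence}: if ${}^{\#}S_{n}\to\infty$ a.s., then the dual walk has only finitely many weakly descending ladder epochs with probability one, so $\Prob_{\pi}({}^{\#}\sle=\infty)>0$, hence $\Prob_{i}({}^{\#}\sle=\infty)>0$ for some $i$ (indeed $c = \Prob_{\pi}({}^{\#}\sle=\infty) = \sum_i \pi_i \Prob_i({}^{\#}\sle=\infty) > 0$ forces at least one positive term).

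For the second assertion, I would simply unfold the definition ${}^{\star}G_{ij}^{\leqslant} = (\pi_{j}/\pi_{i})\,{}^{\#}G_{ji}^{\leqslant}$, so that
\begin{align*}
\sum_{i\in\cS}\pi_{i}\,\|{}^{\star}G_{ij}^{\leqslant}\|\ &=\ \sum_{i\in\cS}\pi_{i}\,\frac{\pi_{j}}{\pi_{i}}\,\|{}^{\#}G_{ji}^{\leqslant}\|\ =\ \pi_{j}\sum_{i\in\cS}\|{}^{\#}G_{ji}^{\leqslant}\|\ =\ \pi_{j}\,\Prob_{j}({}^{\#}\sle<\infty),
\end{align*}
using the identity from the first part with the roles of the indices suitably read off. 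The inequality $\le\pi_{j}$ is then immediate, and strict inequality for at least one $j$ is again the restatement of $\Prob_{j}({}^{\#}\sle=\infty)>0$ for some $j$, which was already established. This also incidentally records that $\sum_{i}\pi_i\|{}^{\star}G^{\leqslant}_{ij}\|$ is summable in $j$ with total sum $c<\infty$ times nothing problematic, but that is not needed for the statement.

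The only genuine point requiring care — and the one I would treat as the main obstacle, though it is minor — is the passage from \eqref{eq:left positive divergence} to "$\Prob_i({}^{\#}\sle=\infty)>0$ for some $i$". One must be slightly careful because ${}^{\#}\sle$ here is a \emph{weakly} descending ladder epoch of $({}^{\#}S_n)$, so $\{{}^{\#}\sle=\infty\}=\{{}^{\#}S_n>{}^{\#}S_0=0\text{ for all }n\ge 1\}$, i.e. the event that $0$ is a strict ascending ladder epoch (from the left) of the dual walk. Positive divergence ${}^{\#}S_n\to\infty$ a.s. guarantees $\min_{n\ge 1}{}^{\#}S_n>-\infty$ a.s., hence the dual walk attains its past-minimum, hence has infinitely many strict ascending ladder epochs, hence $\Prob_{\pi}({}^{\#}\sle=\infty)>0$ by stationarity of the doubly infinite walk. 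Since $\Prob_{\pi}=\sum_i\pi_i\Prob_i$ with all $\pi_i>0$, at least one $\Prob_i({}^{\#}\sle=\infty)$ is positive, which finishes both claims. Everything else is bookkeeping with the definitions of $G^{\leqslant}$ and ${}^{\star}G^{\leqslant}$. \qed
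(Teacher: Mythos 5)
Your proof is correct and follows essentially the same route as the paper: the identity $\sum_{j}\|{}^{\#}G_{ij}^{\leqslant}\|=\Prob_{i}({}^{\#}\sle<\infty)$ and the computation $\sum_{i}\pi_{i}\|{}^{\star}G_{ij}^{\leqslant}\|=\pi_{j}\sum_{i}\|{}^{\#}G_{ji}^{\leqslant}\|=\pi_{j}\Prob_{j}({}^{\#}\sle<\infty)$ are exactly what the paper does. The only point where you diverge is the justification that strict inequality holds for at least one index: the paper argues by contradiction, iterating the strong Markov property (if $\Prob_{i}({}^{\#}\sle<\infty)=1$ for all $i$, then all ${}^{\#}\slen$ are finite $\Prob_{\pi}$-a.s., so ${}^{\#}S_{n}\le 0$ infinitely often, contradicting \eqref{eq:left positive divergence}), whereas you argue directly that positive divergence forces infinitely many strict future minima of the doubly infinite dual walk, so by stationarity the shift-covariant event $\{{}^{\#}S_{n}>0\ \forall n\ge 1\}$ has positive probability under $\Prob_{\pi}$, whence some $\Prob_{i}({}^{\#}\sle=\infty)>0$ since all $\pi_{i}>0$. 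Both arguments are standard and correct; your stationarity version has the small advantage of identifying the positive quantity as $c=\Prob_{\xi}(\sigma_{0}=0)$, while the paper's contrapositive needs no appeal to the two-sided extension. Your first, looser phrasing (``only finitely many weakly descending ladder epochs, so $\Prob_{\pi}({}^{\#}\sle=\infty)>0$'') would by itself need the same strong-Markov iteration to be airtight, but your final paragraph supplies the rigorous version, so there is no gap.
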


\begin{proof}
By the definition of ${}^{\#}G_{\ij}^{\leqslant}$, we have
$$ \sum_{j\in\cS}\|{}^{\#}G_{\ij}^{\leqslant}\|\ =\ \sum_{j\in\cS}\Prob_{i}\left({}^{\#}M_{1}^{\leqslant}=j,{}^{\#}\sle<\infty\right)\ =\ \Prob_{i}\left({}^{\#}\sle<\infty\right)\ \le\ 1 $$
for all $i\in\cS$. Moreover, strict inequality must hold for at least one $i$, for otherwise
$\Prob_{\pi}({}^{\#}\sle<\infty)=1$ would follow and then inductively $\Prob_{\pi}({}^{\#}\slen<\infty)=1$ for all $n\in\N$, i.e.
$$ \Prob_{\pi}\left({}^{\#}S_{n}\le 0\text{ i.o.}\right)\ =\ 1, $$
which is impossible under the assumption of two-sided positive divergence.

\vspace{.1cm}
For the proof of \eqref{f980}, we note that
\begin{align*}
\sum_{i\in\cS}\pi_{i}\,\|{}^{\star}G_{\ij}^{\leqslant}\|\ &=\ \sum_{i\in\cS}\pi_{j}\,\|{}^{\#}G_{ji}^{\leqslant}\|\\
&=\ \pi_{j}\sum_{i\in\cS}\Prob_{j}({}^{\#}M_{1}^{\leqslant}=i,{}^{\#}\sle<\infty)\\
&=\ \pi_{j}\,\Prob_{j}({}^{\#}\sle<\infty),
\end{align*}
and by an analogous argument as before this value must be less than $\pi_{j}$ for at least one $j$ if two-sided positive divergence holds.\qed
\end{proof}

\begin{proof}[of the stationarity of $\pi^{>}$ given by \eqref{eq:pig second identity}]
It follows from the Wiener-Hopf factorization \eqref{eq:WHF1} that
\begin{equation}\label{f983}
I-\|G\|\ =\ \left(I-\|{}^{\star}G^{\leqslant}\|\right)\left(I-\|G^{>}\|\right).
\end{equation}
Multiplying this identity from the left with $\pi^{\top}$ (the transpose of $\pi$) and observing that
$\|G\|=P=(p_{\ij})_{i,j\in\cS}$ is the transition matrix of $M$, we infer
\begin{align*}
0\ =\ \pi^{\top}(I-P)\ =\ \pi^{\top}\left(I-\|{}^{\star}G^{\leqslant}\|\right)\left(I-\|G^{>}\|\right).
\end{align*}
By Lemma \ref{lem:auxiliary}, in particular \eqref{f980}, and the fact that all $\pi_{i}$ are positive, the nonnegative vector
$$ \pi^{\top}\left(I-\|{}^{\star}G^{\leqslant}\|\right)\ =\ \left(\pi_{i}\,\Prob_{i}\left({}^{\#}\sge=\infty\right)\right)_{i\in\cS} $$
is not identially zero and thus a proper solution to the equation $x(I-\|G^{>}\|)=0$. After normalization through $c$, it therefore forms a stationary distribution of $M^{>}$ with transition matrix $\|G^{>}\|$.\qed
\end{proof}

\section{The null recurrent case}\label{sec:null recurrent case}

It is not difficult to extend Theorem \ref{thm:first result} to the case when, ceteris paribus, the driving chain $(M_{n})_{n\ge 0}$ is null recurrent with essentially unique (up to positive scalars) stationary measure $\pi$. First of all, it should be observed that the ladder chain $(\Mgn)_{n\ge 0}$ may still be positive recurrent on some $\cS^{>}$. In fact, $(\Mgn)_{n\ge 0}$ takes only values in the set
$$ \cS^{+}\ :=\ \{s\in\cS:F_{is}(\R_{>})>0\text{ for some }i\in\cS\} $$
positive recurrence on some $\cS^{>}\subset\cS^{+}$ follows whenever $\cS^{+}$ is finite (see also Remark \ref{rem2:condition (6)}).

The following theorem is proved in essentially the same manner as Theorem \ref{thm:first result}, and we therefore restrict ourselves to some comments regarding its proof. Put
$$ \xi\ :=\ \Prob_{\pi}((M_{1},X_{1})\in\cdot), $$
which is the essentially unique stationary measure of $(M_{n},X_{n})_{n\ge 0}$. Since $\pi$ and $\xi$ have infinite mass now, we define
\begin{align*}
c\ =\ 
\begin{cases}
\Erw_{\xi}\left(\sigma_{1}-\sigma_{0}\right)^{-1}=\Prob_{\xi}(\sigma_{0}=0),&\text{if these expressions are finite},\\
\hfill 1,&\text{otherwise}.
\end{cases}
\end{align*}

\begin{Theorem}\label{thm:second result}
Let $(M_{n},S_{n})_{n\ge 0}$ be a MRW having null recurrent discrete driving chain $(M_{n})_{n\ge 0}$ with essentially unique stationary measure $\pi=(\pi_{i})_{i\in\cS}$. Suppose that the dual $({}^{\#}S_{n})_{n\ge 0}$ is positive divergent in the sense of \eqref{eq:left positive divergence}. Then the ladder chain $\Mg=(\Mgn)_{n\ge 0}$ possesses an essentially unique stationary measure $\pi^{>}=(\pi_{i}^{>})_{i\in\cS}$, defined by \eqref{eq:pig first identity} with $c$ as above
and has $\pi$-density $f(i)=\Prob_{\xi}(\sigma_{0}=0|M_{0}=i)$ for $i\in\cS$. Moreover, the ladder chain is recurrent on $\cS^{>}:=\{i\in\cS:\pi_{i}^{>}>0\}$, and
\begin{equation}\label{Mg ergodic}
\Prob_{i}\left(\tg(\cS^{>})<\infty\right)=1
\end{equation}
for all $i\in\cS$, where $\tg(\cS^{>}):=\inf\{n\ge 1:\Mgn\in\cS^{>}\}$. Finally, positive recurrence holds iff
\begin{equation}\label{eq:condition positive recurrence}
\Erw_{\xi}\left(\frac{1}{\sigma_{1}-\sigma_{0}}\right)\ =\ \Prob_{\xi}(\sigma_{0}=0)\ <\ \infty.
\end{equation}
\end{Theorem}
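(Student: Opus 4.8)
The plan is to mirror the proof of Theorem~\ref{thm:first result} almost verbatim, keeping careful track of where finiteness of the various masses was used and replacing it by the appropriate null-recurrent substitutes. First I would fix the normalization convention: with $\pi$ and $\xi$ now $\sigma$-finite but of infinite mass, one still has that $\xi:=\Prob_{\pi}((M_{1},X_{1})\in\cdot)$ is an essentially unique stationary measure of $(M_{n},X_{n})_{n\ge 0}$ and, since the chain is recurrent, it admits the usual doubly infinite stationary extension $(M_{n},X_{n})_{n\in\Z}$ under $\Prob_{\xi}$. Under \eqref{eq:left positive divergence} the doubly infinite ladder epochs $(\sigma_{n})_{n\in\Z}$ are still a.s.\ well defined, so the computation in Section~\ref{sec:proof first theorem} establishing
$$ \Erw_{\xi}\Bigl(\tfrac{1}{\sigma_{1}-\sigma_{0}}\1_{\{M_{\sigma_{0}}=i\}}\Bigr)\ =\ \Prob_{\xi}(M_{0}=i,\sigma_{0}=0) $$
goes through unchanged, because it rests only on the stationarity of $(M_{n},X_{n})_{n\in\Z}$ and a Fubini rearrangement of nonnegative terms; similarly the displayed "telescoping" identity proving $\pi^{>}$-stationarity of the ladder chain, together with its $\pi$-density form $f(i)=c^{-1}\Prob_{\xi}(\sigma_{0}=0\mid M_{0}=i)$, uses only stationarity and the Markov property of $(\Mgn)_{n\in\Z}$ under $\Prob_{\xi}$, neither of which needs finite mass. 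Hence $\pi^{>}$ defined by \eqref{eq:pig first identity} is an essentially unique stationary measure of $(\Mgn)_{n\ge 0}$.

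Next I would re-run the coupling argument to get recurrence on $\cS^{>}$ and \eqref{Mg ergodic}. Here the only point requiring attention is that the coupling of $(M_{n}')_{n\ge 0}$ and $(M_{n}'')_{n\ge 0}$ succeeds: for a null recurrent chain two independent copies started from $i$ and $j$ still meet a.s.\ (the chain is recurrent, so it visits any fixed reference state $s$ infinitely often from either start, giving a successful coupling at visits to $s$, possibly after passing through an exchange argument or the standard splitting at a recurrent atom). Once the a.s.\ finite coupling time $T$ is in hand, the remaining bookkeeping with $Y$, $\tau$, $\rho$ and the tail identity $({M_{n}''}^{>})_{n\ge\tau}=(\wh M_{n}^{>})_{n\ge\rho}$ is identical to the positive recurrent case and yields, for $i\in\cS$ and $j\in\cS^{>}$,
$$ \Prob_{i}(\Mgn=j\text{ i.o.})\ =\ \Prob_{j}(\Mgn=j\text{ i.o.}), $$
so $\cS^{>}$ is irreducible and recurrent for the ladder chain and $\Prob_{i}(\tg(\cS^{>})<\infty)=1$ for all $i$. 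The complement $\cS\setminus\cS^{>}$ consists of states that can only be transient for $\Mg$, exactly as before.

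Finally I would establish the positive-recurrence dichotomy \eqref{eq:condition positive recurrence}. Since $\cS^{>}$ is an irreducible recurrent class, the ladder chain restricted to $\cS^{>}$ is positive recurrent if and only if its (essentially unique) stationary measure $\pi^{>}$ has finite total mass. But summing \eqref{eq:pig first identity} over $i\in\cS^{>}$ (equivalently over $i\in\cS$, since $\pi_{i}^{>}=0$ off $\cS^{>}$) gives
$$ \sum_{i\in\cS}\pi_{i}^{>}\ =\ \frac{1}{c}\,\Erw_{\xi}\Bigl(\frac{1}{\sigma_{1}-\sigma_{0}}\Bigr)\ =\ \frac{1}{c}\,\Prob_{\xi}(\sigma_{0}=0), $$
where the second equality is the same Fubini rearrangement used for the per-state identity, now without the indicator. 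With the convention fixed above (namely $c=\Erw_{\xi}(\sigma_{1}-\sigma_{0})^{-1}$ when this is finite, and $c=1$ otherwise), this total mass is finite precisely when $\Erw_{\xi}((\sigma_{1}-\sigma_{0})^{-1})=\Prob_{\xi}(\sigma_{0}=0)<\infty$, which is \eqref{eq:condition positive recurrence}; and in that case the normalization by $c$ turns $\pi^{>}$ into a genuine probability distribution, recovering the statement of Theorem~\ref{thm:first result} as the special case where $\pi$ itself is finite. I expect the one genuinely new point — and hence the main obstacle — to be the verification that the two-copy coupling is still a.s.\ successful in the null recurrent setting; everything else is a transcription of the arguments in Sections~\ref{sec:proof first theorem} with "probability distribution" weakened to "$\sigma$-finite stationary measure".
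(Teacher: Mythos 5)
Your overall strategy is exactly the paper's: the published proof of Theorem~\ref{thm:second result} consists of little more than the observation that $\pi_{i}^{>}\le\pi_{i}$ keeps $\pi^{>}$ $\sigma$-finite when \eqref{eq:condition positive recurrence} fails, followed by the statement that the rest follows the same lines as Theorem~\ref{thm:first result}. Your transcription of the Fubini/stationarity computation, of the stationarity of $\pi^{>}$, and of the positive-recurrence dichotomy via the total mass $\sum_{i}\pi_{i}^{>}=c^{-1}\,\Erw_{\xi}\bigl((\sigma_{1}-\sigma_{0})^{-1}\bigr)$ is correct; these steps indeed use only shift-invariance of the $\sigma$-finite measure $\Prob_{\xi}$ and nonnegativity of the summands.

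The one step you single out as the main obstacle is, however, also the one where your justification breaks down. It is not true in general that two \emph{independent} copies of an irreducible null recurrent chain started from $i$ and $j$ meet almost surely: the product of two independent copies of a null recurrent chain may be transient (e.g.\ renewal-type chains whose renewal sequence $u_{n}=\Prob_{0}(M_{n}=0)$ satisfies $\sum_{n}u_{n}^{2}<\infty$), and the fact that each copy visits a fixed reference state $s$ infinitely often says nothing about \emph{simultaneous} visits to $s$. What saves the argument is that the coupling used in Section~\ref{sec:proof first theorem} need not be the independent one: by Orey's theorem an irreducible aperiodic recurrent chain satisfies $\|\delta_{i}P^{n}-\delta_{j}P^{n}\|\to 0$ for all $i,j$, and by the converse to the coupling inequality this is equivalent to the existence of a successful (in general non-co-adapted) exact coupling with a.s.\ finite coupling time $T$; the switched process $\wh{M}$ then still has the law of the chain started at $i$, and your subsequent bookkeeping with $Y$, $\tau$, $\rho$ goes through verbatim. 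So the gap is repairable, but you must replace ``independent copies meet a.s.'' by an appeal to Orey's theorem (with the usual cyclic-class adjustment in the periodic case) --- a point the paper itself leaves implicit by omitting the details.
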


\begin{proof}
If \eqref{eq:condition positive recurrence} is valid, then $\pi^{>}$ forms again a probability distribution. If the condition fails, then observe that
$$ \pi_{i}^{>}\ =\ \Prob_{\xi}(M_{0}=i,\sigma_{0}=i)\ \le\ \Prob_{\xi}(M_{0}=i)\ \le\ \pi_{i} $$
for all $i\in\cS$ that $\pi^{>}$ is still a $\sigma$-finite measure. After these observations 
the proof follows exactly the same lines as for Theorem \ref{thm:first result} and can therefore be omitted.\qed
\end{proof}

Since the dual chain $({}^{\#}M_{n},{}^{\#}X_{n})_{n\ge 0}$  with kernel ${}^{\#}Q$ given by \eqref{eq:dual kernel} is still well-defined, we see that the assertions of Corollary \ref{cor:first result} remain true as well when defining $c:=1$ in the case when \eqref{eq:condition positive recurrence} fails and thus $\Prob_{\pi}\left({}^{\#}\sle=\infty\right)=\infty$.

\section{A counterexample}\label{sec:IPFC}

The following counterexample, taken from \cite{AlsBuck:16} and discussed in greater detail there, shows that $(S_{n})_{n\ge 0}$ and $({}^{\#}S_{n})_{n\ge 0}$ may be of different fluctuation type.

\vspace{.1cm}
Let $(M_{n})_{n\ge 0}$ be a Markov chain on the set $\N_{0}$ of nonnegative integers which, when in state 0, picks an arbitrary $i\in\N$ with positive probability $p_{0i}$ and jumps back to 0, otherwise, thus $p_{i0}=1$. If we figure the $i\in\N$ being placed on a circle around 0, the transition diagram of this chain looks like a \emph{flower with infinitely many petals}, each of the petals representing a transition from 0 to some $i$ and back. With all $p_{0i}$ being positive, the chain is clearly irreducible and positive recurrent with stationary probabilities $\pi_{0}=\frac{1}{2}$ and
$$ \pi_{i}\ =\ \frac{1}{2}\,\Erw_{0}\left(\sum_{n=0}^{\tau(0)-1}\1[M_{n}=i]\right)\ =\ \frac{1}{2}\,\Prob_{0}(M_{1}=i)\ =\ \frac{p_{0i}}{2}. $$
In fact, under $\Prob_{0}$, the chain consists of independent random variables which are 0 for even $n$ and i.i.d. for odd $n$ with common distribution $(p_{0i})_{i\ge 1}$.

Next, we define the $X_{n}$ by
$$ X_{n}\ :=\ 
\begin{cases}
\hfill -p_{0i}^{-1},&\text{if }M_{n-1}=0,\,M_{n}=i,\\
2+p_{0i}^{-1},&\text{if }M_{n-1}=i,\,M_{n}=0
\end{cases}
$$
for $n\ge 1$, i.e.\ $F_{0i}=\delta_{-p_{0i}^{-1}}$ and $F_{i0}=\delta_{2+p_{0i}^{-1}}$. It follows that
$$ S_{n}\ :=\ 
\begin{cases}
n-1-p_{0M_{n}}^{-1},&\text{if }n\text{ is odd},\\
\hfill n,&\text{if }n\text{ is even}
\end{cases}
\quad\Prob_{0}\text{-a.s.},
$$
and thereupon that $(M_{n},S_{n})_{n\ge 0}$ is oscillating, for
\begin{align*}
&\hspace{1cm}\lim_{n\to\infty}\frac{S_{2n}}{2n}\ =\ 1
\shortintertext{and}
\liminf_{n\to\infty}S_{2n+1}\ &=\ \liminf_{n\to\infty}\left(n-1-\frac{1}{p_{0M_{2n+1}}}\right)\ =\ -\infty\quad\Prob_{0}\text{-a.s.}
\end{align*}
The last assertion follows from the fact that, for any $a>0$,
$$ \sum_{n\ge 0}\Prob_{0}\left(\frac{1}{p_{0M_{2n+1}}}>an\right)\ =\ \sum_{n\ge 0}\Prob_{0}(X_{1}>an)\ \ge\ \frac{\Erw_{0}X_{1}}{a}\ =\ \infty $$
and an appeal to the Borel-Cantelli lemma, giving
\begin{equation}\label{eq:1/p_{0i}>n i.o.}
\Prob_{0}\left(\frac{1}{p_{0M_{2n+1}}}>an\text{ i.o.}\right)\ =\ 1.
\end{equation}

\vspace{.2cm}
Turning to the dual MRW $({}^{\#}M_{n},{}^{\#}S_{n})_{n\ge 0}$, it has increments
$$ {}^{\#}X_{n}\ :=\ 
\begin{cases}
2+p_{0i}^{-1},&\text{if }{}^{\#}M_{n-1}=0,\,{}^{\#}M_{n}=i,\\
\hfill -p_{0i}^{-1},&\text{if }{}^{\#}M_{n-1}=i,\,{}^{\#}M_{n}=0
\end{cases}
$$
for $n\ge 1$ and is therefore positive divergent, for
$$ {}^{\#}S_{n}\ :=\ 
\begin{cases}
n+1+p_{0M_{n}}^{-1},&\text{if }n\text{ is odd},\\
\hfill n,&\text{if }n\text{ is even}
\end{cases}
\quad\Prob_{0}\text{-a.s.}
$$

\bibliographystyle{abbrv}
\bibliography{StoPro}

\def\cprime{$'$}
\begin{thebibliography}{1}

\bibitem{Alsmeyer:00}
G.~Alsmeyer.
\newblock The ladder variables of a {M}arkov random walk.
\newblock {\em Probab. Math. Statist.}, 20(1):151--168, 2000.

\bibitem{Alsmeyer:01}
G.~Alsmeyer.
\newblock Recurrence theorems for {M}arkov random walks.
\newblock {\em Probab. Math. Statist.}, 21(1):123--134, 2001.

\bibitem{Alsmeyer:14}
G.~Alsmeyer.
\newblock Quasistochastic matrices and {M}arkov renewal theory.
\newblock {\em J. Appl. Probab.}, 51A(Celebrating 50 Years of The Applied
  Probability Trust):359--376, 2014.

\bibitem{AlsBuck:16}
G.~Alsmeyer and F.~Buckmann.
\newblock Fluctuation theory for {M}arkov random walks, 2016.
\newblock Preprint.

\bibitem{Asmussen:89}
S.~Asmussen.
\newblock Aspects of matrix {W}iener-{H}opf factorisation in applied
  probability.
\newblock {\em Math. Sci.}, 14(2):101--116, 1989.

\bibitem{Asmussen:03}
S.~Asmussen.
\newblock {\em Applied probability and queues}.
\newblock Springer-Verlag, New York, $2^{nd}$ edition, 2003.

\bibitem{Lalley:86}
S.~P. Lalley.
\newblock Renewal theorem for a class of stationary sequences.
\newblock {\em Probab. Theory Relat. Fields}, 72(2):195--213, 1986.

\bibitem{Sigman:95}
K.~Sigman.
\newblock {\em Stationary marked point processes}.
\newblock Stochastic Modeling. Chapman \& Hall, New York, 1995.
\newblock An intuitive approach.

\bibitem{Thorisson:00}
H.~Thorisson.
\newblock {\em Coupling, stationarity, and regeneration}.
\newblock Probability and its Applications (New York). Springer-Verlag, New
  York, 2000.

\end{thebibliography}

\end{document}